\newtheorem{theorem}{Theorem}[section]
\newtheorem{proposition}[theorem]{Proposition}
\theoremstyle{definition}
\theoremstyle{remark}
\newtheorem{remark}[theorem]{Remark}
\numberwithin{equation}{section}
\newcommand{\1}{\mathbb 1}
\newcommand{\eps}{\varepsilon}
\newcommand{\R}{\mathbb{R}}
\newcommand{\be}{\begin{equation}}
\newcommand{\ee}{\end{equation}}
\newcommand{\new}[1]{{#1}}
\DeclareFontFamily{U}{mathx}{\hyphenchar\font45}
\DeclareFontShape{U}{mathx}{m}{n}{
      <5> <6> <7> <8> <9> <10>
      <10.95> <12> <14.4> <17.28> <20.74> <24.88>
      mathx10
      }{}
\DeclareSymbolFont{mathx}{U}{mathx}{m}{n}
\DeclareMathAccent{\widecheck}{0}{mathx}{"71}
\DeclareMathAccent{\wideparen}{0}{mathx}{"75}
\newcommand{\uumlaut}{{\"u}}
\newcommand{\tria}{{\mathcal T}}
\newcommand{\bbT}{\mathbb{T}}
\newcommand{\cP}{\mathcal{P}}
\newcommand{\identity}{\mathrm{Id}}
\newcommand{\HH}{\mathscr{H}}
\newcommand{\V}{\mathscr{V}}
\newcommand{\B}{\mathscr{B}}
\newcommand{\Ss}{\mathscr{S}}
\newcommand{\W}{\mathscr{W}}
\newcommand{\Y}{\mathscr{Y}}
\newcommand{\ZZ}{\mathscr{Z}}
\DeclareMathOperator{\ran}{ran}
\DeclareMathOperator{\diag}{diag}
\DeclareMathOperator{\gen}{gen}
\newcommand{\cL}{\mathcal L}
\newcommand{\Lis}{\cL\mathrm{is}}
\newcommand{\cLis}{\cL\mathrm{is}_c}
\title{Uniform preconditioners of linear complexity for problems of negative order}
\date{\today}
\author{Rob Stevenson, Raymond van Veneti\"{e}}
\address{
Korteweg-de Vries Institute for Mathematics,
University of Amsterdam,
P.O. Box 94248,
1090 GE Amsterdam, The Netherlands}
\email{r.p.stevenson@uva.nl, r.vanvenetie@uva.nl}
\thanks{The second author has been supported by the Netherlands Organization for Scientific Research
(NWO) under contract. no. 613.001.652}
\subjclass[2010]{
65F08, 
65N38, 
65N30, 
45Exx. 
}
\keywords{}
\begin{document}
\begin{abstract}
    We propose a multi-level type operator that can be used in the framework of operator (or Cald\'{e}ron) preconditioning to construct uniform preconditioners for negative order operators discretized by piecewise polynomials on a family of possibly locally refined partitions.
    The cost of applying this multi-level operator scales linearly in the number of mesh cells. Therefore, it provides a uniform preconditioner that can be applied
in linear complexity when used within the preconditioning framework from our earlier work [\emph{Math.~of Comp.}, 322(89) (2020), pp. 645--674].
\end{abstract}

\maketitle

\section{Introduction}
In this work we construct a multi-level type preconditioner for operators of
negative orders $-2s \in [-2,0]$  that can be applied in linear time and yields uniformly bounded condition numbers.
The preconditioner will be constructed using the framework of `operator preconditioning'
studied earlier in e.g.~\cite{249.15,38.76,138.26,249.97}.
The role of the `opposite order operator' will be fulfilled by a multi-level type operator, based on the work of Wu and Zheng in~\cite{316.55}.

For some $d$-dimensional domain (or manifold) $\Omega$, a measurable, closed, possibly empty $\gamma \subset \partial\Omega$, and an $s \in [0,1]$,
we consider the Sobolev spaces
\[
\W:=[L_2(\Omega),H^1_{0,\gamma}(\Omega)]_{s,2},\quad \V:=\W'.
\]
with $H^1_{0,\gamma}(\Omega)$ being the closure in $H^1(\Omega)$ of the smooth functions on $\Omega$ that vanish at $\gamma$.
Let $(\V_\tria)_{\tria \in \bbT} \subset \V$ be a family of \emph{piecewise} or \emph{continuous piecewise} polynomials of some fixed degree w.r.t.~uniformly shape regular, possibly locally refined partitions.
With, for $\tria \in \bbT$,
$A_\tria \colon \V_\tria \to \V_\tria'$ being some boundedly invertible linear operator,
we are interested in constructing a \emph{preconditioner} $G_\tria \colon \V_\tria' \to \V_\tria$ such that the preconditioned operator $G_\tria A_\tria\colon \V_\tria \to \V_\tria$ is uniformly boundedly invertible, and an application of $G_\tria$ can be evaluated in $\mathcal O(\dim \V_\tria)$ arithmetic operations.

In order to create such a preconditioner, we will use the framework described in our earlier work~\cite{249.97}.
Given $\V_\tria$, we constructed an auxiliary space $\W_\tria \subset \W$ with $\dim \W_\tria = \dim \V_\tria$, such that for $D_\tria$ defined by $(D_\tria \new{v})(w) := \langle v, w \rangle_{L_2(\Omega)} (v \in \V_\tria, w \in \W_\tria)$ and some suitable `opposite order' operator $B^\W_\tria \colon \W_\tria \to \W_\tria'$, a preconditioner $G_\tria$ of the form $G_\tria := D_\tria^{-1} B^\W_\tria (D_\tria')^{-1}$ is found.
The space $\W_\tria$ is equipped with a basis that, \new{modulo a scaling,} is biorthogonal to the canonical basis for $\V_\tria$, \new{so that the representation of $D_\tria$ is an invertible diagonal matrix.}

With $\Ss_{\tria,0}^{0,1} \subset \W$ being the space
of continuous piecewise linears w.r.t.~$\tria$, zero on $\gamma$, the
above preconditioning approach hinges on the availability of a uniformly boundedly invertible operator $B_\tria^{\Ss}\colon \Ss_{\tria,0}^{0,1} \rightarrow (\Ss_{\tria,0}^{0,1})'$, which is generally the most demanding requirement.
For example, if $s =\frac{1}{2}$ and $\gamma = \emptyset$, a viable option is to take $B_\tria^{\Ss}$ as the discretized hypersingular operator.
While this induces a uniform preconditioner, the application of $B_\tria^\Ss$ cannot be evaluated in linear complexity.

In this work we construct a suitable multi-level type operator $B_\tria^{\Ss}$ that \emph{can} be applied in linear complexity.
For this construction we require $\bbT$ to be a family of conforming partitions created by Newest Vertex Bisection ({\cite{203.1,286}).
In the aforementioned setting of having an arbitrary $s \in [0,1]$, this multi-level operator $B_\tria^{\Ss}$ induces a uniform preconditioner $G_\tria$, i.e., $G_\tria A_\tria$ is uniformly well-conditioned, where the cost of applying $G_\tria$ scales linearly in $\dim \V_\tria$. We also show that the preconditioner extends to the more general \emph{manifold} case, where $\Omega$ is a $d$-dimensional (piecewise) smooth Lipschitz manifold, and the trial space $\V_\tria$ is
the parametric lift of \new{a} space of piecewise or continuous piecewise polynomials.

\new{Finally, we remark} that common multi-level preconditioners based on overlapping subspace decompositions are known not to work well for operators of negative order.
A solution is provided by resorting to direct sum multi-level subspace decompositions. Examples are given by wavelet preconditioners,
or closely related, the preconditioners from~\cite{34.8}, \new{for the latter} assuming quasi-uniform partitions.

For $\new{-}s = \new{-}\frac12$, an optimal multi-level preconditioner based on a \new{\emph{non}}-overlapping subspace decomposition
for operators  defined on the boundary of a $2$- or $3$-dimen\-sional Lipschitz polyhedron was recently introduced in~\cite{75.069}.
\subsection{Outline}
In Sect.~\ref{sec:preconditioning} we summarize the (operator) preconditioning framework
from~\cite{249.97}. In Sect.~\ref{sec:multilevel} we provide the multi-level type operator
that can be used as the `opposite order' operator inside the preconditioner framework. In Sect.~\ref{sec:manifold} we comment on how to generalize the results to the case of piecewise smooth manifolds. In Sect.~\ref{sec:numerics} we conclude with numerical
results.

\subsection{Notation}
In this work, by $\lambda \lesssim \mu$ we mean that $\lambda$ can be bounded by a multiple of $\mu$, independently of parameters which $\lambda$ and $\mu$ may depend on, with the sole exception of the space dimension $d$, or in the manifold case, on the parametrization of the manifold that is used to define the finite element spaces on it. Obviously, $\lambda \gtrsim \mu$ is defined as $\mu \lesssim \lambda$, and $\lambda\eqsim \mu$ as $\lambda\lesssim \mu$ and $\lambda \gtrsim \mu$.

For normed linear spaces $\Y$ and $\ZZ$, in this paper for convenience over $\R$, $\cL(\Y,\ZZ)$ will denote the space of bounded linear mappings $\Y \rightarrow \ZZ$ endowed with the operator norm $\|\cdot\|_{\cL(\Y,\ZZ)}$. The subset of invertible operators in $\cL(\Y,\ZZ)$  with inverses in $\cL(\ZZ,\Y)$
will be denoted as $\Lis(\Y,\ZZ)$.
The \emph{condition number} of a $C \in \Lis(\Y,\ZZ)$ is defined as $\kappa_{\Y,\ZZ}(C):=\|C\|_{\cL(\Y,\ZZ)}\|C^{-1}\|_{\cL(\ZZ,\Y)}$.

For $\Y$ a reflexive Banach space and $C \in \cL(\Y,\Y')$ being \emph{coercive}, i.e.,
\[
\inf_{0 \neq y \in \Y} \frac{(Cy)(y)}{\|y\|^2_\Y} >0,
\]
both $C$ and $\Re(C)\!:= \!\frac{1}{2}(C+C')$ are in $\Lis(\Y,\Y')$ with
\begin{align*}
\|\Re(C)\|_{\cL(\Y,\Y')} &\leq \|C\|_{\cL(\Y,\Y')},\\
\|C^{-1}\|_{\cL(\Y',\Y)} & \leq \|\Re(C)^{-1}\|_{\cL(\Y',\Y)}=\Big(\inf_{0 \neq y \in \Y} \frac{(Cy)(y)}{\|y\|^2_\Y}\Big)^{-1}.
\end{align*}
The set of coercive $C \in \Lis(\Y,\Y')$ is denoted as $\cLis(\Y,\Y')$.
If $C   \in \cLis(\Y,\Y')$, then $C^{-1} \in \cLis(\Y',\Y)$ and $\|\Re(C^{-1})^{-1}\|_{\cL(\Y,\Y')} \leq \|C\|_{\cL(\Y,\Y')}^2 \|\Re(C)^{-1}\|_{\cL(\Y',\Y)}$.

Given a family of operators $C_i \in \Lis(\Y_i,\ZZ_i)$ ($\cLis(\Y_i,\ZZ_i)$), we will write $C_i \in \Lis(\Y_i,\ZZ_i)$ ($\cLis(\Y_i,\ZZ_i)$) uniformly in $i$, or simply `uniform', when
    \[
    \sup_{i} \max(\|C_i\|_{\cL(\Y_i,\ZZ_i)},\|C_i^{-1}\|_{\cL(\ZZ_i,\Y_i)})<\infty,
     \]
or
    \[
    \sup_{i} \max(\|C_i\|_{\cL(\Y_i,\ZZ_i)},\|\Re(C_i)^{-1}\|_{\cL(\ZZ_i,\Y_i)})<\infty.
     \]
\section{Preconditioning}\label{sec:preconditioning}
Let $(\tria)_{\tria \in \bbT}$ be a family of \emph{conforming} partitions of a domain $\Omega \subset \R^d$ into (open) \emph{uniformly shape regular} $d$-simplices, where we assume that $\gamma$ is the (possibly empty) union of $(d-1)$-faces of $T \in \tria$.
For $d \geq 2$, such partitions automatically satisfy a uniform $K$-mesh property, and for $d=1$ we impose this as an additional condition.
The discussion of the manifold case is postponed to Sect.~\ref{sec:manifold}.

Recalling that $\V_\tria \subset \V$ is a family of piecewise or continuous piecewise polynomials of some fixed degree w.r.t.~$\tria$,
let $A_\tria \in \Lis(\V_{\tria},\V_{\tria}')$ \new{uniformly} in $\tria \in \bbT$.
A common setting is that $(A_\tria v)(\tilde v):=(A v)(\tilde v)$ ($v, \tilde v \in \V_\tria$) for some $A \in \cLis(\V, \V')$.
We are interested in finding optimal \emph{preconditioners} $G_\tria$ for $A_\tria$, i.e., $G_\tria \in \Lis(\V_{\tria}',\V_{\tria})$ uniformly in $\tria \in \bbT$,
whose application moreover requires ${\mathcal O}(\dim \V_\tria)$ arithmetic operations.

Recall the space
 \[
     \Ss_{\tria,0}^{0,1}:=\{u \in H^1_{0,\gamma}(\Omega)\colon u|_T \in \cP_1 \,(T \in \tria)\} \subset \W
\]
(thus equipped with $\|\cdot\|_\W$). In~\cite{249.97}, using operator preconditioning, we reduced the issue of constructing such preconditioners $G_\tria$ to the issue of constructing
$B_\tria^{\Ss} \in \cLis(\Ss_{\tria,0}^{0,1},(\Ss_{\tria,0}^{0,1})')$ uniformly. In the next section we summarize this reduction.

\subsection{Construction of optimal preconditioners}\label{sec:theory}
For the moment, consider the lowest order case of $\V_\tria$ being either the space of piecewise constants or continuous piecewise linears. In~\cite{249.97} a space $\W_\tria \subset \W$ was constructed with $\dim \W_\tria=\dim \V_\tria$ and
\begin{equation} \label{infinfsup}
\inf_{\tria \in \bbT} \inf_{0 \neq v \in \V_\tria} \sup_{0 \neq w \in \W_\tria} \frac{\langle v,w\rangle_{L_2(\Omega)}}{\|v\|_{\V} \|w\|_{\W}}>0.
\end{equation}
Moreover, $\W_\tria \subset \W$ was equipped with a locally supported basis $\Psi_\tria$ that, \new{modulo a scaling}, is $L_2(\Omega)$-\emph{biorthogonal} to the canonical basis $\Xi_\tria$ of $\V_\tria$.

As a consequence of~\eqref{infinfsup}, $D_\tria$ defined by $(D_\tria v)(w):=\langle v,w\rangle_{L_2(\Omega)}$ ($v \in \V_\tria,\, w \in \W_\tria$)
is in $\Lis(\V_{\tria},\W_{\tria}')$ uniformly.
We infer that once we have constructed $B^\W_\tria \in \Lis(\W_{\tria},\W_{\tria}')$ uniformly, \new{then} by taking
\begin{equation} \label{precond}
G_\tria:=D_\tria^{-1} B^\W_\tria (D'_\tria)^{-1},
\end{equation}
we have $G_\tria \in \Lis(\V_\tria', \V_\tria)$ uniformly.
Biorthogonality, modulo a scaling, of the bases $\Psi_\tria$ and $\Xi_\tria$
implies that the matrix representation of $D_\tria$ is diagonal, so that $D^{-1}_\tria$ and its adjoint can be applied
 in linear complexity.

The aforementioned space $\W_\tria$ is a subspace of $\Ss_{\tria,0}^{0,1} \oplus \B_\tria \subset \W$,
where $\B_\tria$  is a `bubble space'  with $\dim \B_\tria={\mathcal O}(\#\tria)$, \new{such that}
the projector $I_\tria$ on $\Ss_{\tria,0}^{0,1} \oplus \B_\tria$, defined by $\ran I_\tria=\Ss_{\tria,0}^{0,1}$ and $\ran(\identity-I_\tria)=\B_\tria$,
is `local' and uniformly bounded, and the canonical basis $\Theta_\tria$ of `bubbles' for $\B_\tria$ is, \new{when normalized in $\|\cdot\|_{\W}$}, a uniformly Riesz basis for $\B_\tria$.
\new{Because of the latter, $B^\B_\tria$ defined by
$$
(B^\B_\tria {\bf c}^\top \Theta_\tria)({\bf d}^\top \Theta_\tria):=\beta (\bm{\Delta}_\tria {\bf c})^\top {\bf d}
$$ for some diagonal $\bm{\Delta}_\tria  \eqsim \diag(\langle \Theta_\tria,\Theta_\tria\rangle_\W)$ and constant $\beta>0$ is in $\cLis(\B_{\tria},\B_{\tria}')$ uniformly.}

Given some `opposite order' operator $B_\tria^{\Ss} \in \cLis(\Ss_{\tria,0}^{0,1},(\Ss_{\tria,0}^{0,1})')$, by taking
\begin{equation}\label{eq:bubble_splitting}
B^\W_\tria:=I_\tria' B_\tria^{\Ss} I_\tria +(\identity-I_\tria)' B^\B_\tria (\identity-I_\tria),
\end{equation}
\new{it holds that} $B^\W_\tria \in \cLis(\W_{\tria},\W_{\tria}')$ uniformly (\cite[Prop.~5.1]{249.975}), which makes $G_\tria$ a uniform preconditioner.

\subsection{Implementation of $G_\tria$}\label{sec:implementation}
Recalling the aforementioned bases $\Xi_\tria$, $\Psi_\tria$, and $\Theta_\tria$ for $\V_\tria$, $\W_\tria$ and $\B_\tria$, respectively, equipping $\Ss_{\tria,0}^{0,1}$ with the nodal basis $\Phi_\tria$, and equipping
$\V_\tria'$, $\W_\tria'$, $\B_\tria'$, and $(\Ss_{\tria,0}^{0,1})'$ with the dual bases $\Xi_\tria'$, $\Psi_\tria'$, $\Theta_\tria'$, and $\Phi_\tria'$, respectively,
the representation of $A_\tria \in \cL(\V_\tria, \V_\tria')$ is the stiffness matrix $\bm{A}_\tria:=(A_\tria \Xi_\tria)(\Xi_\tria):=[(A_\tria \eta)(\xi)]_{(\xi,\eta) \in \Xi_\tria}$, and
the representation of $G_\tria \in \cL(\V_\tria', \V_\tria)$ is the matrix $\bm{G}_\tria:=(G \Xi'_\tria)(\Xi'_\tria)$.
It is given by
\begin{equation}\label{eq:repr_precon}
\bm{G}_\tria=\bm{D}_\tria^{-1} \big(\bm{p}_\tria^\top \bm{B}_\tria^{\Ss} \bm{p}_\tria+\bm{q}_\tria^\top \bm{B}^{\B}_\tria \bm{q}_\tria\big) \bm{D}_\tria^{-\top},
\end{equation}
where both
\begin{alignat*}{2}
\bm{D}_\tria& :=(D_\tria \Xi_\tria)(\Psi_\tria),& \quad \bm{B}^{\B}_\tria&:=(B^{\B}_\tria \Theta_\tria)(\Theta_\tria)
\intertext{are \emph{diagonal}, both}
\bm{p}_\tria &:= (I_\tria \Psi_\tria)(\Phi_\tria'),& \bm{q}_\tria&:= ((\identity-I_\tria) \Psi_\tria)(\Theta_\tria')
\end{alignat*}
are \emph{uniformly sparse}, and
\begin{equation}\label{eq:repr_opposite}
\bm{B}_\tria^{\Ss}:=(B_\tria^{\Ss}\Phi_\tria)(\Phi_\tria).
\end{equation}
Note that the cost of the application of $\bm{G}_\tria$ scales \emph{linearly} in $\#\tria$ as soon as this holds true for the application of $\bm{B}_\tria^{\Ss}$.

The above preconditioning approach is summarized in the following theorem.
\begin{theorem}[{\cite[Sect.~3]{249.97}}]\label{assumption}
Given a family $B_\tria^{\Ss} \in \cLis(\Ss_{\tria,0}^{0,1},(\Ss_{\tria,0}^{0,1})')$ uniformly in $\tria \in \bbT$.
Then for $B^\W_\tria$ as described in~\eqref{eq:bubble_splitting},
the operator $G_\tria$ from~\eqref{precond} is a uniform preconditioner.
Furthermore, if the matrix representation $\bm{B}_\tria^{\Ss}$, cf.~\eqref{eq:repr_opposite}, can be applied in $\mathcal{O}(\#\tria)$ operations, then
the matrix representation of the preconditioner $\bm{G}_\tria$, cf.~\eqref{eq:repr_precon}, can be applied in $\mathcal{O}(\#\tria)$ operations.
\end{theorem}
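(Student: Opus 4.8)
The plan is to assemble the statement from the uniform mapping properties already collected in Section~\ref{sec:theory}, the only external input being the stable-splitting result \cite[Prop.~5.1]{249.975} quoted there. For the first claim I would proceed as follows. By \eqref{infinfsup} one has $D_\tria \in \Lis(\V_\tria,\W_\tria')$ uniformly in $\tria\in\bbT$, hence also $D_\tria' \in \Lis(\W_\tria,\V_\tria')$, $D_\tria^{-1}\in\Lis(\W_\tria',\V_\tria)$ and $(D_\tria')^{-1}\in\Lis(\V_\tria',\W_\tria)$ uniformly. Next, since the canonical bubble basis $\Theta_\tria$ is, normalised in $\|\cdot\|_\W$, a uniform Riesz basis for $\B_\tria$, the operator $B^\B_\tria$ is in $\cLis(\B_\tria,\B_\tria')$ uniformly, and $I_\tria$ is the local, uniformly bounded projector onto $\Ss_{\tria,0}^{0,1}$ along $\B_\tria$. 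With $B_\tria^\Ss\in\cLis(\Ss_{\tria,0}^{0,1},(\Ss_{\tria,0}^{0,1})')$ uniformly by hypothesis, \cite[Prop.~5.1]{249.975} applied to the stable decomposition $\W_\tria=\Ss_{\tria,0}^{0,1}\oplus\B_\tria$ gives $B^\W_\tria\in\cLis(\W_\tria,\W_\tria')$ uniformly for $B^\W_\tria$ as in \eqref{eq:bubble_splitting}. Composing, $G_\tria=D_\tria^{-1}B^\W_\tria(D_\tria')^{-1}\in\Lis(\V_\tria',\V_\tria)$ uniformly; coercivity is inherited by observing that for $0\neq\phi\in\V_\tria'$ and $w:=(D_\tria')^{-1}\phi$ one has $(G_\tria\phi)(\phi)=(D_\tria(D_\tria^{-1}B^\W_\tria w))(w)=(B^\W_\tria w)(w)>0$, with the quantitative bounds transported through the uniform bounds on $D_\tria^{-1}$ and $(D_\tria')^{-1}$. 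Finally $A_\tria\in\Lis(\V_\tria,\V_\tria')$ uniformly, so $G_\tria A_\tria\in\Lis(\V_\tria,\V_\tria)$ uniformly, i.e.\ $G_\tria$ is a uniform preconditioner.

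For the second claim I would work with the matrix representation $\bm{G}_\tria$ of \eqref{eq:repr_precon} described in Section~\ref{sec:implementation}, and check that each factor is applied in $\mathcal{O}(\#\tria)$ operations. The matrices $\bm{D}_\tria=(D_\tria\Xi_\tria)(\Psi_\tria)$ and $\bm{B}^\B_\tria=(B^\B_\tria\Theta_\tria)(\Theta_\tria)$ are diagonal and invertible---the former by biorthogonality modulo scaling of $\Psi_\tria$ and $\Xi_\tria$ together with \eqref{infinfsup}, the latter since it equals $\beta\bm{\Delta}_\tria$ with $\bm{\Delta}_\tria\eqsim\diag(\langle\Theta_\tria,\Theta_\tria\rangle_\W)$---so $\bm{D}_\tria^{-1}$, $\bm{D}_\tria^{-\top}$ and $\bm{B}^\B_\tria$ each cost $\mathcal{O}(\#\tria)$. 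The matrices $\bm{p}_\tria=(I_\tria\Psi_\tria)(\Phi_\tria')$ and $\bm{q}_\tria=((\identity-I_\tria)\Psi_\tria)(\Theta_\tria')$ are uniformly sparse because $I_\tria$ is local and the bases $\Psi_\tria$, $\Phi_\tria$, $\Theta_\tria$ are locally supported with uniformly bounded overlap; since $\dim\V_\tria$, $\dim\Ss_{\tria,0}^{0,1}$ and $\dim\B_\tria$ are all $\mathcal{O}(\#\tria)$, the products with $\bm{p}_\tria$, $\bm{q}_\tria$ and their transposes cost $\mathcal{O}(\#\tria)$. By hypothesis, applying $\bm{B}_\tria^\Ss=(B_\tria^\Ss\Phi_\tria)(\Phi_\tria)$ costs $\mathcal{O}(\#\tria)$. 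Evaluating $\bm{x}\mapsto\bm{G}_\tria\bm{x}$ from right to left---first $\bm{D}_\tria^{-\top}\bm{x}$, then the products with $\bm{p}_\tria$ and $\bm{q}_\tria$, then $\bm{B}_\tria^\Ss$ and $\bm{B}^\B_\tria$, then $\bm{p}_\tria^\top$ and $\bm{q}_\tria^\top$, add, then $\bm{D}_\tria^{-1}$---is a fixed number of steps, each $\mathcal{O}(\#\tria)$, hence $\mathcal{O}(\#\tria)$ in total.

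The substance of the argument sits entirely in the quoted \cite[Prop.~5.1]{249.975}: that the additive operator \eqref{eq:bubble_splitting} built along a \emph{stable} direct-sum splitting is again uniformly coercive and bounded. Its proof relies on the uniform boundedness of $I_\tria$ and of $\identity-I_\tria$ to transport the norm equivalence between $\W_\tria$ and $\Ss_{\tria,0}^{0,1}\times\B_\tria$, and on the vanishing of the cross terms because $I_\tria(\identity-I_\tria)$ and $(\identity-I_\tria)I_\tria$ are zero on the relevant ranges. Granting that input, everything else is bookkeeping: keeping track of which operators are uniform and composing them for the first claim, and counting the sparsity and the diagonal structure of the matrices in \eqref{eq:repr_precon} for the second.
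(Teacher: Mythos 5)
Your proposal is correct and follows essentially the same route as the paper: the first claim is exactly the assembly of the uniform mapping properties of $D_\tria$ from \eqref{infinfsup} with the stability of the splitting \eqref{eq:bubble_splitting} provided by \cite[Prop.~5.1]{249.975}, and the second claim is the operation count via the diagonality of $\bm{D}_\tria$, $\bm{B}^\B_\tria$ and the uniform sparsity of $\bm{p}_\tria$, $\bm{q}_\tria$ already recorded in Sect.~\ref{sec:implementation}. The added verification of coercivity of $G_\tria$ is a harmless extra, not needed for the statement as formulated.
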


\new{Because $B_\tria^\W$ in \eqref{eq:bubble_splitting} is given as the sum of two operators that `act' on different subspaces of $\W_\tria$, the condition number of the preconditioned system depends on the relative scaling of both these operators which can be steered by selecting the parameter $\beta$.
A suitable $\beta$ will be selected experimentally.}

\new{Alternatively, \cite[Prop.~5.1]{249.975} shows that a value of $\beta$ is reasonable if it is chosen such that the interval bounded by the coercivity and boundedness constants of $B_\tria^{\Ss} $ is included in that interval corresponding to $B_\tria^{\B} $ or vice versa. Also these coercivity and boundedness constants can be approximated experimentally or by making some theoretical estimates.}

\new{Constructions of $\Psi_\tria$, $\Theta_\tria$, and $\bm{\Delta}_\tria$, and} resulting explicit formulas for matrices $\bm{D}_\tria$, $\bm{B}^{\B}_\tria$, $\bm{p}_\tria$, $\bm{q}_\tria$ are derived in~\cite{249.97}. For ease of reading we \new{recall} these
formulas below for the case \new{that} $\V_\tria$ is the space of \emph{piecewise constants}. \new{F}or the continuous piecewise linear case we refer to~\cite[\new{Sect.~4.2}]{249.97}.

\subsubsection{Piecewise constant trial space $\V_\tria$} \label{sec:piecewise_contant}
 For $\tria \in \bbT$,  we define $N_\tria$ as the set of vertices of $\tria$, and $N_\tria^{0}$ as the set of vertices of $\tria$ that are not on $\gamma$. For $\nu \in N_\tria$ we set its \emph{valence}
 \[
d_{\tria,\nu}:=\#\{T \in \tria\colon \nu \in \overline{T}\}.
\]
For $T \in \tria$, and with $N_T$ denoting the set of its vertices, we set $N^0_{\tria,T}:=N^0_\tria \cap N_T$.

If one considers $\V_\tria$ as the space of discontinuous piecewise constants, i.e.
\[
    \V_\tria = \Ss_\tria^{-1,0} := \{ u \in L_2(\Omega) \colon u|_T \in P_0 (T \in \tria)\} \subset \V,
\]
equipped with the canonical basis $\Xi_\tria := \{ \1_T \colon T \in \tria\}$, then we find, \new{for arbitrary constant $\beta > 0$},
 \begin{align*}
&\bm{D}_\tria=\diag\{|T|\colon T \in \tria\}, \quad &&
(\bm{p}_\tria)_{\nu T}=\left\{\begin{array}{cl} d_{\tria,\nu}^{-1} & \text{if } \nu \in N^0_{\tria,T},\\ 0 & \text{if }\nu \not\in N^0_{\tria,T},\end{array}\right.\\
&\bm{B}^{\B}_\tria=  \beta \bm{D}_\tria^{1-\frac{2s}{d}},\quad
&&\displaystyle (\bm{q}_\tria)_{T' T}=\delta_{T' T}- {\textstyle \frac{1}{d+1}}\hspace*{-1em}\sum_{\nu \in N^0_{\tria,T} \cap N^0_{\tria,T'}} d_{\tria,\nu}^{-1}.
\end{align*}
%
\subsection{Higher order case}
For higher order discontinuous or continuous finite element spaces $\V_\tria$,  suitable preconditioners $G_\tria$ can be built either from the current preconditioner $G_\tria$ for the lowest order case by application of a subspace correction method (most conveniently in the discontinuous case where on each element the space of polynomials of some fixed degree is split into the space of constants and its orthogonal complement), or by expanding $\W_\tria$ by enlarging the bubble space $\B_\tria$.
\new{While referring to~\cite{249.97} for details, we recall that with either option the construction of an optimal preconditioner $G_\tria$ that can be applied in linear complexity hinges on the availability of
operator $B_\tria^{\Ss} \in \cLis(\Ss^{0,1}_{\tria,0},(\Ss^{0,1}_{\tria,0})')$ uniform, that can be applied in linear complexity.}

\section{An operator $B_\tria^{\Ss}$ of multi-level type}\label{sec:multilevel}
In this section we will introduce an  operator $B_\tria^{\Ss} \in \cLis(\Ss^{0,1}_{\tria,0},(\Ss^{0,1}_{\tria,0})')$ of multi-level type.
The operator $B_\tria^{\Ss}$ is based on a stable multi-level decomposition of $\Ss_{\tria,0}^{0,1}$ given by Wu and Zheng~\cite{316.55}.
Usually such a stable multi-level decomposition is used as a theoretical tool for proving optimality
of an additive (or multiplicative) Schwarz type preconditioner
for an operator in $\cLis(\Ss_{\tria,0}^{0,1},(\Ss_{\tria,0}^{0,1})')$.
In this work, however, we are going to use their results
for the construction of
 the operator $B_\tria^{\Ss} \in \cLis(\Ss^{0,1}_{\tria,0},(\Ss^{0,1}_{\tria,0})')$ for which it is crucial that its application  can be implemented in
 linear complexity.

\subsection{Definition and analysis of $B_\tria^{\Ss}$}
For $d \geq 2$, let $\bbT$ be the family of all conforming partitions of $\Omega$ into $d$-simplices that can be created by Newest Vertex Bisection
starting from some given conforming initial partition $\tria_\bot$ that satisfies a \emph{matching condition} (\cite{249.87}).

With $\mathfrak{T}:=\cup_{\tria \in \bbT} \{T\colon T \in \tria\}$ and
$\mathfrak{N}:=\cup_{\tria \in \bbT} N_\tria$, for $T \in \mathfrak{T}$ let $\gen(T)$ be the number of bisections needed to create $T$ from its ancestor $T' \in \tria_\bot$, and for $\nu \in \mathfrak{N}$ let $\gen(\nu):=\min\{\gen(T)\colon T \in \mathfrak{T},\,\nu \in \new{N_T}\}$.
\new{Notice that $|T| \eqsim 2^{-\gen(T)}$.}
For $T \in \mathfrak{T}$, let $Q_T$ denote the $L_2(T)$-orthogonal projector onto $\cP_1(T)$.

The case $d=1$ can be included by letting $\bbT$ be the family of a partitions of $\Omega$ that can be constructed by bisections from $\tria_\bot=\{\Omega\}$ such that the generations of any two neighbouring subintervals in any $\tria \in \bbT$ differ by not more than one.

For $\tria \in \bbT$, set $L=L(\tria):=\max_{T \in \tria} \gen(T)$ and define
\[
\tria_\bot = \tria_0 \prec\tria_1 \prec \cdots \prec \tria_L=\tria \subset \bbT
\]
by constructing $\tria_{j-1}$ from $\tria_j$ by removing all $\nu \in \new{N_j:=}N_{\tria_j}$ from the latter for which $\gen(\nu)=j$.
For $\nu \in \new{N_j^0:=}N_{\tria_j}^0$, we define $\new{\omega_j}(\nu)=\cup\{T \in \tria_j\colon \nu \in \new{N_T}\}$.

With this hierarchy of partitions, we define an averaging quasi-interpolator $\Pi_j \in \cL(\Ss_{\tria,0}^{0,1},\Ss_{\tria_j,0}^{0,1})$ by
\begin{equation}\label{eq:averaging}
    (\Pi_j u)(\nu):=\frac{\sum_{\{T \in \tria_j\colon \nu \in \new{N_T}\}} |T| (Q_T u)(\nu)}{\sum_{\{T \in \tria_j\colon \nu \in \new{N_T}\}} |T|} \quad (u \in \Ss_{\tria,0}^{0,1},\,\nu \in \new{N^0_j}).
\end{equation}
 Since $\Ss_{\tria_j,0}^{0,1}$ is a space of continuous piecewise linears, it indeed suffices to define $\Pi_j u$ at the vertices $\new{N_j^0}$.
Recall that $\Ss_{\tria, 0}^{0,1} \subset \W:=[L_2(\Omega),H^1_{0,\gamma}(\Omega)]_{s,2}$ \new{for some $s \in [0,1]$}. The next theorem shows that $\Pi_j$ induces a stable multi-level decomposition of~$\Ss_{\tria,0}^{0,1}$.
\begin{theorem}[{\cite[Lemma~3.7]{316.55}}]\label{WZ}
 For the averaging quasi-interpolator $\Pi_j$ from~\eqref{eq:averaging}, and $\Pi_{-1}:=0$,  it holds that
\[
\|u\|_{\W}^2 \eqsim \sum_{j=0}^L 4^{j s/d} \|(\Pi_j-\Pi_{j-1})u\|_{L_2(\Omega)}^2 \quad(u \in \Ss_{\tria,0}^{0,1}).
\]
\end{theorem}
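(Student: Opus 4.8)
The plan is to establish the norm equivalence at the two endpoints $s=0$ and $s=1$ and to obtain the general case $s\in(0,1)$ by interpolation. Write $P_j:=\Pi_j-\Pi_{j-1}$ with $\Pi_{-1}:=0$; since $\tria_L=\tria$ and $\Pi_L$ reproduces every function in $\Ss_{\tria,0}^{0,1}$, the telescoping sum gives $\sum_{j=0}^{L}P_j u=u$ for $u\in\Ss_{\tria,0}^{0,1}$. For $t\in\{0,1\}$ let $\ell_2^{(t)}$ denote the space of finite sequences $(w_j)_{j=0}^{L}$ with $w_j\in\ran P_j$, normed by $\bigl(\sum_j 4^{jt/d}\|w_j\|_{L_2(\Omega)}^2\bigr)^{1/2}$, and consider the analysis map $Tu:=(P_ju)_j$ and the synthesis map $S(w_j)_j:=\sum_j w_j$, so that $ST=\identity$ on $\Ss_{\tria,0}^{0,1}$. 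Using the interpolation identity $[\ell_2^{(0)},\ell_2^{(1)}]_{s,2}=\ell_2^{(s)}$ (interpolation of weighted $\ell_2$-sums of a Hilbert space, with uniformly equivalent norms), it then suffices to show, with constants uniform in $\tria\in\bbT$ and hence independent of $L$, that $T$ is bounded $L_2(\Omega)\to\ell_2^{(0)}$ and $H^1_{0,\gamma}(\Omega)\to\ell_2^{(1)}$ and that $S$ is bounded $\ell_2^{(0)}\to L_2(\Omega)$ and $\ell_2^{(1)}\to H^1_{0,\gamma}(\Omega)$; the coretraction argument then transfers this to $\W=[L_2(\Omega),H^1_{0,\gamma}(\Omega)]_{s,2}$, giving $\|u\|_{\W}\eqsim\|Tu\|_{\ell_2^{(s)}}$.

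To prove the four endpoint bounds I would first record the properties of the averaging quasi-interpolator that make them run: (i) each $\Pi_j$ is a projection onto $\Ss_{\tria_j,0}^{0,1}$, is uniformly $L_2(\Omega)$-bounded, and is local -- all immediate from the convex-combination form of~\eqref{eq:averaging}, $\|Q_Tu\|_{L_2(T)}\le\|u\|_{L_2(T)}$, uniform shape regularity and the $K$-mesh property; (ii) the inverse (Bernstein) inequality $\|v\|_{H^1(\Omega)}\lesssim 2^{j/d}\|v\|_{L_2(\Omega)}$ for $v\in\Ss_{\tria_j,0}^{0,1}$, valid \emph{even though $\tria_j$ is only locally refined}, because $\gen(T)\le j$ for all $T\in\tria_j$ forces $|T|\gtrsim 2^{-j}$, so the element-wise inverse inequality can be summed; (iii) a local Jackson estimate, $\|(\identity-\Pi_j)u\|_{L_2(T)}\lesssim\diam(T)\,\|\grad u\|_{L_2(\widetilde T)}$ for $T\in\tria_j$ on a patch $\widetilde T$; and (iv) the structural fact that $P_ju$ is supported within a bounded number of element layers of the region refined between $\tria_{j-1}$ and $\tria_j$, where the local mesh size is $\eqsim 2^{-j/d}$. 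The boundedness of $S$ at both endpoints, equivalently $\|\sum_j w_j\|_{L_2(\Omega)}^2\lesssim\sum_j\|w_j\|_{L_2(\Omega)}^2$ and $\|\sum_j w_j\|_{H^1(\Omega)}^2\lesssim\sum_j 4^{j/d}\|w_j\|_{L_2(\Omega)}^2$ for $w_j\in\ran P_j$, then follows from (i)--(iii) via a strengthened Cauchy--Schwarz estimate $|\langle w_i,w_j\rangle_{H^t}|\lesssim q^{|i-j|}\|w_i\|_{H^t}\|w_j\|_{H^t}$ with some $q<1$ together with the Schur test; the boundedness of $T$ at both endpoints follows by combining (iv) and the Jackson estimate with the same Cauchy--Schwarz estimate (for $s<1$ the weight $4^{-j(1-s)/d}$ that then appears gives extra room, while at $s=1$ one must rely on the geometric decay across scales built into the approximation and stability of the $\Pi_j$, not merely on counting supports). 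Interpolating then yields the asserted equivalence for every $s\in[0,1]$.

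The main obstacle is the strengthened Cauchy--Schwarz estimate on a \emph{graded} hierarchy: on a locally refined mesh a fixed point lies in the refinement region of $\mathcal{O}(L)$ successive levels, so the decay factor $q^{|i-j|}$ must be genuine, and this forces one to exploit the cancellation built into $P_j=\Pi_j-\Pi_{j-1}$ -- it annihilates $\Ss_{\tria_{j-1},0}^{0,1}$ -- together with the fact that neighbouring simplices differ by only boundedly many generations, which is exactly where the standing assumptions on $\bbT$ enter (Newest Vertex Bisection from a matching initial partition, respectively the bounded-generation-difference condition for $d=1$). Carrying out this generation-sliced bookkeeping uniformly in the shape and depth of the refinement is the technical heart of the statement, and is precisely~\cite[Lemma~3.7]{316.55}, which I would invoke; I do not expect a materially shorter route to it.
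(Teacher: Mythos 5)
Your proposal is correct in outline and, for the hard inequality, ultimately leans on the same external result as the paper, but it takes a genuinely different and heavier route for the easy one. Both you and the paper obtain $\|u\|_{\W}^2\gtrsim\sum_j 4^{js/d}\|(\Pi_j-\Pi_{j-1})u\|_{L_2(\Omega)}^2$ by invoking \cite[Lemma~3.7]{316.55}: the paper cites it directly, remarking that the arguments given there for $s=1$, $d\in\{2,3\}$, $\gamma=\partial\Omega$ extend, while you package it as endpoint bounds for the analysis map at $s\in\{0,1\}$ followed by interpolation --- a legitimate way of making that extension precise, provided you also note that $T$ must be extended from $\Ss^{0,1}_{\tria,0}$ to all of $L_2(\Omega)$ and $H^1_{0,\gamma}(\Omega)$ for the coretraction argument (the formula \eqref{eq:averaging} does extend, so this is harmless). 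The real divergence is the upper bound `$\lesssim$'. The paper proves it in a few lines for an \emph{arbitrary} decomposition $u=\sum_j u_j$ with $u_j\in\Ss^{0,1}_{\tria_j,0}$, using only the reiteration theorem, the inverse inequality $\|\cdot\|_{\HH^{rt}}\lesssim 2^{jrt/d}\|\cdot\|_{L_2(\Omega)}$ on $\Ss^{0,1}_{\tria_j,0}$, and an $\eps$-shifted Cauchy--Schwarz in the scale $\HH^{rt}$; no property of $\Pi_j$ beyond $\ran(\Pi_j-\Pi_{j-1})\subset\Ss^{0,1}_{\tria_j,0}$ is needed, and no strengthened Cauchy--Schwarz with a genuine $q<1$ on the graded hierarchy. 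Your route instead requires the $L_2$- and $H^1$-endpoint bounds for the synthesis operator on arbitrary sequences $w_j\in\ran P_j$; the $L_2$ one in particular is not contained in \cite[Lemma~3.7]{316.55} as stated and genuinely needs the support localization of $\ran P_j$ (Proposition~\ref{propWZ}) to produce the decay $q^{|i-j|}$, so you would end up re-deriving a substantial part of Wu--Zheng's generation-sliced bookkeeping for a direction where it can be bypassed. On the other hand, your endpoint-plus-interpolation scheme covers $s=0$ cleanly, whereas the paper's argument uses $\eps\in(0,s)$ and hence implicitly $s>0$.
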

\begin{proof}
 In \cite{316.55}, the inequality `$\gtrsim$' was proven for the case $s=1$, $d \in \{2,3\}$, and $\gamma=\partial\Omega$. The arguments, however, immediately extend to $s \in [0,1]$, $d \geq 1$, and $\gamma \subsetneq \partial\Omega$.

The proof of the other inequality `$\lesssim$' follows from well-known arguments:
For some $t \in (\new{1},\frac32)$, let $\HH^{r t}:=[L_2(\Omega),H^1_{0,\gamma}(\Omega) \cap H^t(\Omega)]_{r,2}$ for $r \in [0,1]$.
Then $\HH^{s} \simeq \W$ by the \emph{reiteration theorem}, and for $r \in [0,1]$,
$\|\cdot\|_{\HH^{r t}} \lesssim 2^{j r t/d} \|\cdot\|_{L_2(\Omega)}$ on $\Ss_{\tria_j,0}^{0,1}$.

Let $u \in \Ss_{\tria,0}^{0,1}$ be written as $\sum_{j=0}^L u_j$ with $u_j \in \Ss_{\tria_j,0}^{0,1}$.
Then for $\eps \in (0,s)$, $\eps\leq t-s$, we have
\begin{align*}
\|u\|^2_{\HH^{s}(\Omega)} &\lesssim \sum_{j=0}^L \sum_{i=j}^L \|u_j\|_{\HH^{s+\eps}(\Omega)}\|u_i\|_{\HH^{s-\eps}(\Omega)}\\
& \lesssim \sum_{j=0}^L \sum_{i=j}^L 2^{j (s+\eps)/d} 2^{i (s-\eps)/d} \|u_j\|_{L_2(\Omega)}\|u_i\|_{L_2(\Omega)} \lesssim \sum_{j=0}^L 4^{j s/d}\|u_j\|_{L_2(\Omega)}^2.\qedhere
\end{align*}
\end{proof}

The relevance of the multi-level decomposition from Theorem~\ref{WZ} by Wu and Zheng lies in the fact that
$(\Pi_j u)(\nu)$ can only differ from $(\Pi_{j-1}u)(\nu)$ in any $\nu \in \new{N^0_j}\setminus N^0_{\new{j-1}}$ as well as in \new{only \emph{two}\footnote{\new{As pointed out in \cite{316.55}, for $d \geq 3$ the number of such neighbours will be larger when employing the Scott-Zhang quasi-interpolator. Moreover, this interpolator is not suited for $s \leq \frac12$.}} of its neighbours in $N^0_{j-1}$ (the endpoints
of the edge on which $\nu$ was inserted):}

\begin{proposition}[{\cite[Lemma~3.1]{316.55}}]\label{propWZ}
With for $j \geq 1$,
\[
M^0_{\new{j}}:=\{\nu \in N^0_{\new{j-1}}\colon \new{\omega_j}(\nu)=\new{\omega_{j-1}}(\nu)\},
\]
 it holds that for $\nu \in M^0_{\new{j}}$, $((\Pi_j-\Pi_{j-1})u)(\nu)=0$, see Figure~\ref{fig2}.
\end{proposition}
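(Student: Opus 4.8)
The claim is purely local and combinatorial. Fix $j \geq 1$ and $\nu \in M^0_j$, so that $\omega_j(\nu) = \omega_{j-1}(\nu)$; I denote this common patch by $\omega$. The plan is to show that the averaging weights and the local $L_2$-projections entering the definition \eqref{eq:averaging} of $\Pi_j$ and $\Pi_{j-1}$ coincide at $\nu$. First I would observe that the condition $\omega_j(\nu)=\omega_{j-1}(\nu)$ means precisely that none of the simplices of $\tria_{j-1}$ containing $\nu$ was bisected in the refinement step from $\tria_{j-1}$ to $\tria_j$; hence $\{T \in \tria_j : \nu \in N_T\} = \{T \in \tria_{j-1} : \nu \in N_T\}$, i.e.\ the index sets in the numerator and denominator of \eqref{eq:averaging} are identical for levels $j$ and $j-1$, and the weights $|T|$ are the same.

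Next I would compare $(Q_T u)(\nu)$ for $T$ a simplex in this common star. Here $Q_T$ is the $L_2(T)$-orthogonal projector onto $\cP_1(T)$ and $u \in \Ss^{0,1}_{\tria,0}$; since $T$ itself is unchanged between the two levels, $Q_T$ is literally the same operator, so $(Q_T u)(\nu)$ takes the same value whether we are assembling $\Pi_j u$ or $\Pi_{j-1} u$. Therefore, term by term, the numerators agree and the denominators agree, giving $(\Pi_j u)(\nu) = (\Pi_{j-1} u)(\nu)$, i.e.\ $((\Pi_j - \Pi_{j-1})u)(\nu) = 0$. The only subtlety to spell out is that $(\Pi_{j-1}u)(\nu)$ is well-defined by the same formula even though $u \notin \Ss^{0,1}_{\tria_{j-1},0}$ in general — but \eqref{eq:averaging} is stated for arbitrary $u \in \Ss^{0,1}_{\tria,0}$, so this is immediate.

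The one genuine point requiring care — and the main (mild) obstacle — is the Newest Vertex Bisection bookkeeping: one must be sure that the hierarchy $\tria_0 \prec \cdots \prec \tria_L$, built top-down by removing generation-$j$ vertices, has the property that passing from $\tria_{j-1}$ to $\tria_j$ consists of NVB bisections each of which inserts a new vertex of generation $j$ on an edge, splitting exactly the simplices sharing that edge; and that a vertex $\nu \in N^0_{j-1}$ survives into $N^0_j$ with an unchanged star iff it is not an endpoint of any such refinement edge. This is exactly the structural content of the NVB construction referenced via \cite{249.87}, so I would simply invoke it. Once that is in place the argument is the two-line weight/projection comparison above; the figure (Figure~\ref{fig2}) illustrates that for the inserted vertex $\nu$ and its two edge-endpoints the star does change, so these are the only nodes where $(\Pi_j - \Pi_{j-1})u$ can be nonzero.
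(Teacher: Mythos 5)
Your argument covers only the trivial case and misses the essential content of the lemma. The flaw is the assertion that $\omega_j(\nu)=\omega_{j-1}(\nu)$ ``means precisely that none of the simplices of $\tria_{j-1}$ containing $\nu$ was bisected,'' so that $\{T\in\tria_j:\nu\in N_T\}=\{T\in\tria_{j-1}:\nu\in N_T\}$. That equivalence is false: $\omega_j(\nu)$ is defined as the \emph{union} of the simplices in the star of $\nu$, i.e.\ a point set. If $T'\in\tria_{j-1}$ has $\nu$ as a vertex and is bisected along a refinement edge of which $\nu$ is \emph{not} an endpoint, then both children $T_1,T_2\in\tria_j$ still have $\nu$ as a vertex and $T_1\cup T_2=T'$, so the point-set patch is unchanged while the collection of simplices is not. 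Such vertices do belong to $M^0_j$ — this is exactly what Figure~\ref{fig2} depicts: of the vertices involved in the bisection of a tetrahedron, only the new vertex and the two endpoints of the refinement edge leave $M^0_j$ — and for them your term-by-term comparison of identical index sets is unavailable. (Your closing sentence about the figure in fact contradicts your own key step.)

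For these vertices one must prove the genuinely nontrivial identity
\[
|T_1|(Q_{T_1}u)(\nu)+|T_2|(Q_{T_2}u)(\nu)=|T'|(Q_{T'}u)(\nu),
\]
which, combined with $|T_1|+|T_2|=|T'|$ so that the denominators in \eqref{eq:averaging} also match, yields $(\Pi_j u)(\nu)=(\Pi_{j-1}u)(\nu)$. This identity is the actual content of \cite[Lemma~3.1]{316.55}. It follows by writing $(Q_Tu)(\nu)=\langle u,\tilde\phi^T_\nu\rangle_{L_2(T)}$ with $\tilde\phi^T_\nu$ the dual basis function of the nodal basis of $\cP_1(T)$; since the inverse mass matrix is invariant under permutations of the barycentric coordinates, $\tilde\phi^T_\nu$ takes one value at $\nu$ and a common value at all other vertices, and one checks that $|T_1|\tilde\phi^{T_1}_\nu$ and $|T_2|\tilde\phi^{T_2}_\nu$ glue to the single linear function $|T'|\tilde\phi^{T'}_\nu$ on $T'$ precisely when $\nu$ is off the refinement edge. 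This mechanism — the one the paper's remark following the proposition alludes to — is absent from your proof, so the proof as written does not establish the proposition.
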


\begin{figure}
\begin{center}
\includegraphics[width=2cm]{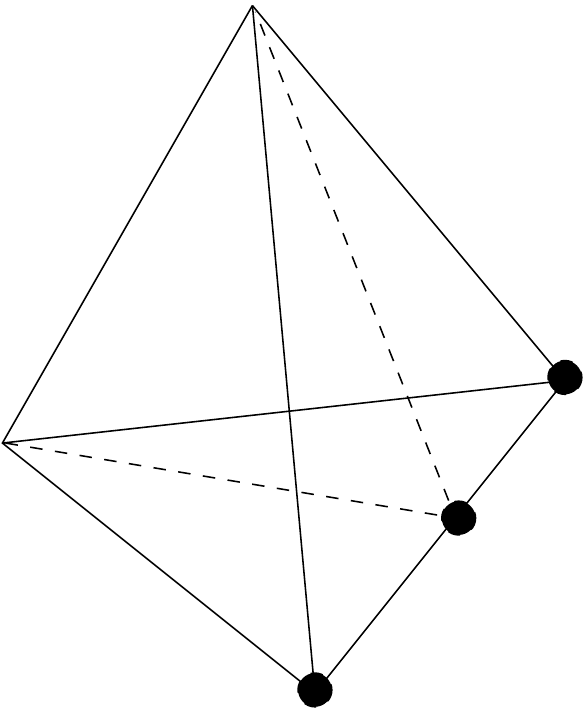}
\end{center}
\caption{For $d=3$, a tetrahedron $T \in \tria_{j-1}$ and its bisection. The dots indicate \new{all} vertices in $\new{N_j^0} \setminus M^0_{\new{j}}$.}
\label{fig2}
\end{figure}

\begin{remark} The proof from \cite{316.55} given for $d \in \{2,3\}$ generalizes to $d \geq 1$.
Indeed the arguments that are used are based on the fact
that the basis \new{for} $S_1(T)$ that is dual to the nodal basis takes equal values in all but one nodal point. This is a consequence of the fact that the mass matrix of the nodal basis \new{for} $S_1(T)$, and so its inverse, is invariant under permutations of the barycentric coordinates, which holds true in any dimension.
\end{remark}

As a consequence of Proposition~\ref{propWZ}, we have
\[
    \|(\Pi_j-\Pi_{j-1})u\|_{L_2(\Omega)}^2 \eqsim 2^{-j} \hspace{-12pt} \sum_{\nu \in \new{N_j^0} \setminus M^0_{\new{j}}} \hspace{-10pt} |((\Pi_j-\Pi_{j-1})u)(\nu)|^2.
\]
From Theorem~\ref{WZ}, we conclude that $B_{\tria}^{\Ss}\new{=(B_{\tria}^{\Ss})'}  \in \cLis(\Ss_{\tria,0}^{0,1},(\Ss_{\tria,0}^{0,1})')$ defined by
\begin{equation}\label{eq:multilevel_operator}
(B_\tria^{\Ss}u)(v):=\sum_{j=0}^L 2^{j (\frac{2s}{d}-1)}
\sum_{\nu \in \new{N_j^0} \setminus M^0_{\new{j}}} ((\Pi_j-\Pi_{j-1})u)(\nu)((\Pi_j-\Pi_{j-1})v)(\nu)
\end{equation}
is uniform, i.e.~
\begin{equation}\label{unif_bound}
\sup_{\tria \in \bbT} \max\Big(\|B_\tria^{\Ss}\|_{\cL(\Ss_{\tria,0}^{0,1},(\Ss_{\tria,0}^{0,1})')},\|\new{(B_\tria^{\Ss})^{-1}}\|_{\cL((\Ss_{\tria,0}^{0,1})',\Ss_{\tria,0}^{0,1})}\Big)<\infty.
\end{equation}

\subsection{Implementation of $B_\tria^{\Ss}$}\label{sec:impl}
Since the operator $\Pi_j$ is a weighted local $L_2(\Omega)$ projection, it allows for a natural implementation
by considering $\Ss_\tria^{-1,1}$, the space of discontinuous piecewise linears w.r.t.~$\tria$.
Recall the nodal basis $\Phi_\tria$ for $\Ss_{\tria,0}^{0,1}$,
 and equip $\Ss_{\tria}^{-1,1}$ with the element-wise nodal basis.

Denote $\bm{E}_\tria$ for the representation of the embedding $\Ss_{\tria,0}^{0,1}$ into $\Ss_{\tria}^{-1,1}$.

For $0 \leq j \leq L$, let $\bm{R}_j$ be the  representation of the $L_2(\Omega)$-orthogonal projector of $\Ss_{\tria}^{-1,1}$ onto $\Ss_{\tria_{\new{j}}}^{-1,1}$, and let $\bm{R}_{-1}:=0$.

For $0 \leq j \leq L$, let $\bm{H}_j$ be the representation of the averaging operator $H_j\colon\Ss_{\tria_j}^{-1,1} \rightarrow \Ss^{0,1}_{\tria_j,0}$ defined by
\begin{equation} \label{101}
(H_j u)(\nu)=\frac{\sum_{\{T \in \tria_j\colon \nu \in \new{N_T}\}} |T| \,u|_T(\nu)}{\sum_{\{T \in \tria_j\colon \nu \in \new{N_T}\}} |T|},\quad(\nu \in N^0_\tria),
\end{equation}
and let $\bm{H}_{-1}:=0$.

For $1 \leq j \leq L$, let $\bm{P}_j$ be the representation of the embedding $\Ss_{\tria_{j-1},0}^{0,1} \rightarrow \Ss_{\tria_j,0}^{0,1}$ (\new{often called} \emph{prolongation}), and let $\bm{P}_0:=0$.

Then the representation $\bm{B}_\tria^{\Ss}$ of $B_\tria^{\Ss}$ from~\eqref{eq:multilevel_operator} is given by
\[
\bm{B}_\tria^{\Ss}=\bm{E}_\tria^\top \Big(\sum_{j=0}^L  (\bm{H}_j \bm{R}_j-\bm{P}_j \bm{H}_{j-1} \bm{R}_{j-1})^\top 2^{j (\frac{2s}{d}-1)} (\bm{H}_j \bm{R}_j-\bm{P}_j \bm{H}_{j-1} \bm{R}_{j-1})\Big)\bm{E}_\tria.
\]

\new{Applying $\bm{E}_\tria$ amounts to duplicating values at any internal node with a number equal to the valence of that node.}

\new{By representing $\tria$ as the leaves of a binary tree with roots being the simplices of $\tria_\bot$,  computing for $\vec{x} \in \ran \bm{E}_\tria$ the sequence $(\bm{R}_j \vec{x})_{0 \leq j \leq L}$ amounts to computing, while traversing from the leaves to the root, for any
parent and both its children the orthogonal projection of a piecewise linear function on the children to the space of linears on the parent. For $d=2$, the matrix representation of the latter projection is given in Figure~\ref{fig3}.
\begin{figure}
\begin{center}
\raisebox{-10ex}[1.3cm][1.3cm]{{\includegraphics[scale=0.5]{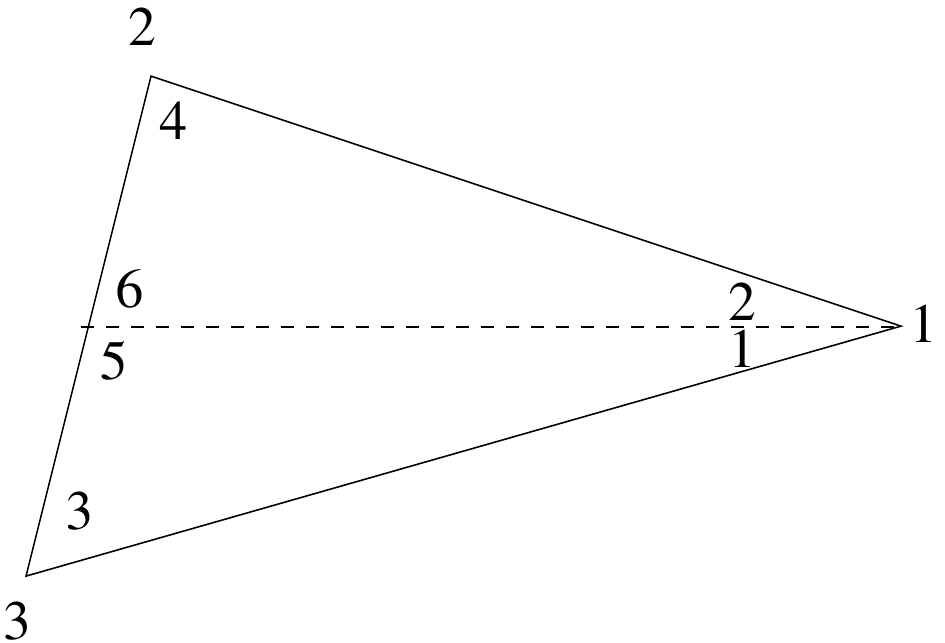}}}
\qquad $\left[
\begin{array}{@{}rrrrrr@{}}
\frac12 & \frac12 & 0 & 0 & 0 & 0\\
-\frac14& \frac14&-\frac14 & \frac34 &0 & \frac12\\
\frac14&-\frac14&\frac34&-\frac14 &\frac12 & 0\\
\end{array}
\right]$
\end{center}
\caption{\new{Numbering of the vertices of the parent and that of both children for $d=2$, and the resulting matrix representation of the orthogonal projection of the space of piecewise linears on the children to the space of linears on the parent.}}
\label{fig3}
\end{figure}}

\begin{proposition}\label{prop:linear_complexity}
The application of $\bm{B}_\tria^{\Ss}$ can be computed in ${\mathcal O}(\# \tria)$ operations.
\end{proposition}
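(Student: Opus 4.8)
The plan is to trace the cost of each factor appearing in the factored form of $\bm{B}_\tria^{\Ss}$ and verify that each can be applied in $\mathcal O(\#\tria)$ operations, exploiting the binary-tree structure of the Newest Vertex Bisection hierarchy. First I would recall that $\#\tria \eqsim \dim \Ss_{\tria,0}^{0,1} \eqsim \sum_{j=0}^L \#(N_j^0 \setminus M_j^0)$, since each refinement step $\tria_{j-1} \prec \tria_j$ inserts at least one new vertex, and that $L = L(\tria)$ may be as large as $\mathcal O(\#\tria)$, so we cannot afford anything that costs $\mathcal O(L \cdot \#\tria)$; the total work summed over levels must telescope to $\mathcal O(\#\tria)$. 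The key observation is that, at level $j$, the operators $\bm{H}_j$, $\bm{P}_j$, $\bm{R}_j/\bm{R}_{j-1}$ only differ from the identity-like action near the simplices that were actually refined between levels $j-1$ and $j$, so the per-level cost is proportional to the number of newly created simplices at that level, and these numbers sum to $\#\tria$.

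The key steps, in order, are as follows. (1) Applying $\bm{E}_\tria$ and $\bm{E}_\tria^\top$: a vector in $\ran \bm{E}_\tria$ stores, per simplex $T \in \tria$, the $d+1$ nodal values; applying $\bm{E}_\tria$ duplicates each nodal value of a function in $\Ss_{\tria,0}^{0,1}$ to all simplices in its support, which by uniform shape regularity is $\mathcal O(1)$ copies, so this costs $\mathcal O(\#\tria)$; the transpose sums back, same cost. (2) Computing the whole family $(\bm{R}_j \vec x)_{0 \le j \le L}$ simultaneously for a given $\vec x \in \ran \bm{E}_\tria$: represent $\tria$ as the leaves of a binary forest rooted at $\tria_\bot$; traversing from leaves to root, for each parent simplex and its two children one applies the fixed small matrix (Figure~\ref{fig3} for $d=2$, analogous constant-size matrix in general $d$) that $L_2$-orthogonally projects a piecewise linear on the two children onto the linears on the parent. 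This is $\mathcal O(1)$ work per internal node of the forest, and the number of internal nodes equals $\#\tria - \#\tria_\bot < \#\tria$, so the whole family costs $\mathcal O(\#\tria)$; crucially we never recompute lower-level projections, we accumulate them once. (3) Applying $\bm{H}_j$ to $\bm{R}_j \vec x$: by~\eqref{101} this is a weighted average over the $\mathcal O(1)$ simplices of $\tria_j$ meeting a vertex $\nu$, but since $\bm{R}_j \vec x$ and $\bm{R}_{j-1}\vec x$ agree on all simplices \emph{not} touched by the $j$-th refinement, $\bm{H}_j \bm{R}_j \vec x$ and $\bm{P}_j \bm{H}_{j-1}\bm{R}_{j-1}\vec x$ agree on all vertices in $M_j^0$ by Proposition~\ref{propWZ}; hence the difference vector $\bm{H}_j\bm{R}_j\vec x - \bm{P}_j\bm{H}_{j-1}\bm{R}_{j-1}\vec x$ is supported on $N_j^0 \setminus M_j^0$ and computing it costs $\mathcal O(\#(N_j^0\setminus M_j^0))$, provided we only inspect the simplices actually altered at level $j$. (4) Scaling by $2^{j(2s/d-1)}$ is one multiplication per nonzero entry. (5) The transpose half $(\bm{H}_j\bm{R}_j - \bm{P}_j\bm{H}_{j-1}\bm{R}_{j-1})^\top$ applied to these scaled level-$j$ vectors, then summed over $j$ and pushed back through $\bm{E}_\tria^\top$: by duality each step has the same cost as its forward counterpart, so the transpose sweep from root to leaves is again $\mathcal O(\#\tria)$ total. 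Summing, the total is $\sum_{j=0}^L \mathcal O(\#(N_j^0\setminus M_j^0)) + \mathcal O(\#\tria) = \mathcal O(\#\tria)$.

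The main obstacle I anticipate is bookkeeping the data structures so that the per-level work at level $j$ is genuinely $\mathcal O(\#(N_j^0\setminus M_j^0))$ and not $\mathcal O(\#N_j^0)$ — i.e. one must be able to enumerate, in time proportional to the number of \emph{new} vertices at level $j$, exactly which simplices and vertices are affected, and evaluate $\bm{R}_j\vec x$, $\bm{H}_j$, $\bm{P}_j$ there without re-reading the untouched part of the mesh. This is exactly what the binary-tree encoding of the NVB hierarchy provides: each bisection is a local tree operation touching $\mathcal O(1)$ simplices, so the affected simplices at level $j$ number $\mathcal O(\#(N_j^0\setminus M_j^0))$, and a single leaves-to-root pass visits each tree node once. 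Once this is set up, the orthogonality relations $\ran \bm{R}_j \perp (\ker \bm{R}_j \cap \Ss_{\tria_j}^{-1,1})$ and Proposition~\ref{propWZ} guarantee the telescoping support structure, and the remaining verification — that each elementary operation (small fixed matrix multiply, weighted average over $\mathcal O(1)$ cells, prolongation across one bisection) is $\mathcal O(1)$ — is routine given uniform shape regularity and the fixed polynomial degree. I would therefore present the proof as: (a) set up the binary-tree data structure and its complexity accounting; (b) bound the forward sweep $\vec x \mapsto (\text{level-}j\text{ differences})_j$ by $\mathcal O(\#\tria)$ using steps (1)–(4); (c) bound the transpose sweep by duality; (d) conclude.
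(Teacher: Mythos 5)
Your proposal is correct and takes essentially the same route as the paper: a leaves-to-root sweep over the binary NVB forest to obtain all $\bm{R}_j\bm{E}_\tria\vec x$ in $\mathcal O(\#\tria)$ operations (total tree nodes $<2\#\tria$), followed by the observation from Proposition~\ref{propWZ} that $(\bm{H}_j\bm{R}_j-\bm{P}_j\bm{H}_{j-1}\bm{R}_{j-1})\bm{E}_\tria\vec x$ vanishes on $M_j^0$, so its support has $\mathcal O(\#(N_j^0\setminus N_{j-1}^0))$ entries each computable in $\mathcal O(1)$, which telescopes to $\mathcal O(\#\tria)$. The only difference is that you spell out the transpose sweep and the data-structure bookkeeping more explicitly, which the paper leaves implicit.
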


\begin{proof}
\new{Because the number of nodes in a binary tree is less than 2 times the number of its leaves, for $\vec{x} \in \R^{\dim \Ss_{\tria,0}^{0,1}}$  the computation of  the sequence $(\bm{R}_j \bm{E}_\tria \vec{x})_{0 \leq j \leq L}$ takes ${\mathcal O}(\# \tria)$ operations.
From Proposition~\ref{propWZ} recall that any vector in $\ran \bm{H}_j \bm{R}_j-\bm{P}_j \bm{H}_{j-1} \bm{R}_{j-1}$ vanishes at $M^0_{\new{j}}$, so that the number of its non-zero entries is bounded by
$\# (\new{N^0_j} \setminus M^0_{\new{j}}) \leq 3 \#(\new{N^0_j} \setminus N^0_{\new{j-1}})$.
Knowing already $\bm{R}_j \bm{E}_\tria\vec{x}$ and $\bm{R}_{j-1} \bm{E}_\tria\vec{x}$, computing any non-zero entry
of $(\bm{H}_j \bm{R}_j-\bm{P}_j \bm{H}_{j-1} \bm{R}_{j-1})\bm{E}_\tria \vec{x}$ requires ${\mathcal O}(1)$ operations.}
\end{proof}

We conclude that the operator $B_\tria^{\Ss}$, with above matrix representation \new{$\bm{B}_\tria^{\Ss}$}, satisfies the \new{requirements} of Theorem~\ref{assumption}.

\section{Manifold case}\label{sec:manifold}
Let $\Gamma$ be a compact
 $d$-dimensional Lipschitz, piecewise smooth manifold in $\R^{d'}$ for some $d' \geq d$ with or without boundary $\partial\Gamma$.
 For some closed measurable $\gamma \subset \partial\Gamma$ and $s \in [0,1]$, let
 \[
\W:=[L_2(\Gamma),H^1_{0,\gamma}(\Gamma)]_{s,2},\quad \V:=\W'.
\]
 We assume that $\Gamma$ is given as the essentially disjoint union of $\cup_{i=1}^p \overline{\chi_i(\Omega_i)}$, with, for $1 \leq i \leq p$,
$\chi_i\colon \R^d \rightarrow \R^{d'}$ being some smooth regular parametrization, and $\Omega_i \subset \R^d$ an open polytope.
 W.l.o.g.~assuming that for $i \neq j$, $\overline{\Omega}_i \cap \overline{\Omega}_j=\emptyset$, we define
 \[
 \chi\colon \Omega:=\cup_{i=1}^p \Omega_i \rightarrow \cup_{i=1}^p \chi_i(\Omega_i) \text{ by } \chi|_{\Omega_i}=\chi_i.
 \]

Let $\bbT$ be a family of conforming partitions $\tria$ of $\Gamma$ into `panels' such that, for $1 \leq i \leq p$,  $\chi^{-1}(\tria) \cap \Omega_i$ is a uniformly shape regular conforming partition of $\Omega_i$ into $d$-simplices (that for $d=1$ satisfies a uniform $K$-mesh property).
We assume that $\gamma$ is a (possibly empty) union of `faces' of $T \in \tria$ (i.e., sets of type $\chi_i(e)$, where $e$ is a $(d-1)$-dimensional face of $\chi_i^{-1}(T)$).

  We set
 \begin{align*}
\V_\tria&:=\{u \in L_2(\Gamma)\colon u \circ \chi |_{\chi^{-1}(T)} \in \cP_0 \,\,(T \in \tria)\} \subset \V,\\
\intertext{or}
\V_\tria&:=\{u \in C(\Gamma)\colon u \circ \chi |_{\chi^{-1}(T)} \in \cP_1 \,\,(T \in \tria)\} \subset \V,\\
\intertext{equipped with canonical basis $\Xi_\tria$, and, for the construction of a preconditioner, }
\Ss^{0,1}_{\tria,0}&:=\{u \in H^1_{0,\gamma}(\Gamma)\colon u \circ \chi |_{\chi^{-1}(T)} \in \cP_1 \,\,(T \in \tria)\} \subset \W,
 \end{align*}
 equipped with canonical basis  $\Phi_\tria$.

As in the domain case, a space $\W_\tria \subset \W$ can be constructed with $\dim \W_\tria= \dim V_\tria$ and
$\inf_{\tria \in \bbT} \inf_{0 \neq v \in \V_\tria} \sup_{0 \neq w \in \W_\tria} \frac{\langle v,w\rangle_{L_2(\Gamma)}}{\|v\|_{\V} \|w\|_{\W}}>0$, which can be equipped with a locally supported basis $\Psi_\tria$ that, \new{modulo a scaling}, is $L_2(\Gamma)$-biorthogonal to $\Xi_\tria$.
Now assuming that a family of $B_\tria^{\Ss}\in \cLis(\Ss^{0,1}_{\tria, 0}, (\Ss^{0,1}_{\tria, 0})')$ uniformly is available, the construction of an optimal preconditioner $G_\tria$ follows exactly the same lines as outlined in Sect.~\ref{sec:preconditioning} for the domain case.

For the case that $\Gamma$ is not piecewise polytopal, a hidden problem is, however, that above construction of $\Psi_\tria$ requires exact integration
of lifted polynomials over the manifold. To circumvent this problem, in~\cite{249.97} we have relaxed the condition of $L_2(\Gamma)$-biorthogonality of $\Xi_\tria$ and $\Psi_\tria$ to
biorthogonality w.r.t.~to a mesh-dependent scalar product obtained from the $L_2(\Gamma)$-scalar product by replacing the Jacobian on the pull back of each panel by its mean. It was shown that the resulting preconditioner is still optimal, and that the expression for its matrix representation \new{(for the moment without the representation of $B_\tria^{\Ss}$)}, that was recalled in Sect.~\ref{sec:piecewise_contant} for the piecewise constant case, applies verbatim by \new{only} reading $|T|$ as the volume of the panel.\footnote{\new{In order to avoid the exact computation of this volume, actually it may read as $|\chi^{-1}(T)||\partial \chi(z)|$ for arbitrary $z \in \chi^{-1}(T)$.}}
\medskip

It remains to discuss the construction of an operator $B_\tria^{\Ss}$ of multi-level type, where it is now assumed that $\bbT$ is a family corresponding to newest vertex bisection.
An exact copy of the construction of $B_\tria^{\Ss}$ given in the domain case would require the application of the panel-wise $L_2(T)$-orthogonal projector
$Q_T$, cf. \eqref{eq:averaging}, which generally poses a quadrature problem.
Reconsidering the domain case, the proof of \cite[Lemma 3.7]{316.55} \new{(which provides the proof of the inequality `$\gtrsim$' in our Theorem~\ref{WZ})} builds on the fact that for $\tria_0 \prec \tria_1 \prec \cdots $ being a sequence of uniformly refined partitions,
the decomposition $\Ss_{\tria_L,0}^{0,1}=\sum_{j=0}^L \Ss_{\tria_{j},0}^{0,1} \cap (\Ss_{\tria_{j-1},0}^{0,1})^{\perp_{L_2(\Omega)}}$, where $\Ss_{\tria_{-1},0}^{0,1}:=\{0\}$,
is stable, uniformly in $L$, w.r.t.~the norm on $\W$.
This stability holds also true when the orthogonal complements are taken w.r.t.~a weighted $L_2(\Omega)$-scalar product, for any weight $w$ with $w,\,1/w \in L_\infty(\Omega)$.

This has the consequence that for the construction of the multi-level operator $B_\tria^{\Ss}$ in the manifold case, we may equip $L_2(\Gamma)$ with scalar product
\[
    \sum_{i=1}^p \int_{\Omega_{i}} u(\chi_{i} (x)) v(\chi_{i}(x)) \,dx,
\]
which is constructed from the canonical $L_2(\Gamma)$-scalar product by simply omitting the Jacobians $|\partial \chi_i(x)|$.
With this modified scalar product, the panel-wise orthogonal projector $Q_T$ is the same as in the domain case. We conclude that the resulting $B_\tria^{\Ss}$ as in \eqref{eq:multilevel_operator} is in $\cLis(\Ss_{\tria,0}^{0,1},(\Ss_{\tria,0}^{0,1})')$
uniformly, and that its application can be performed in linear complexity.
\new{Indeed, its implementation is equal as in the domain case as described in Sect.~\ref{sec:impl} when $|T|$ in \eqref{101} is read as $|\chi^{-1}(T)|$.}

\section{Numerical Experiments}\label{sec:numerics}
Let $\Gamma = \partial [0,1]^3 \subset \R^3$ be the two-dimensional manifold without boundary given as the boundary of the unit cube, $\W := H^{1/2}(\Gamma)$, $\V := H^{-1/2}(\Gamma)$. We consider the trial space $\V_\tria =\Ss_\tria^{-1,0}\subset \V$ of discontinuous piecewise constants. We will evaluate preconditioning of the discretized single layer operator $A_\tria \in \cLis(\V_\tria, \V_\tria')$.

The role of the opposite order operator in $\cLis(\Ss^{0,1}_{\tria,0},(\Ss^{0,1}_{\tria,0})')$ from Sect.~\ref{sec:theory} will be fulfilled by the multi-level operator $B_\tria^{\Ss}$ from~\eqref{eq:multilevel_operator}.
Equipping $\Ss^{0,1}_{\tria,0}$ with the nodal basis $\Phi_\tria$, the matrix representation of the preconditioner $G_\tria$ from Sect.~\ref{sec:theory} reads as
\[
\bm{G}_\tria=\bm{D}_\tria^{-1}\big(\bm{p}_\tria^\top \bm{B}_\tria^{\Ss} \bm{p}_\tria+\beta \bm{q}_\tria^\top \bm{D}^{1/2}_\tria \bm{q}_\tria\big)\bm{D}_\tria^{-1},
\]
for $\bm{D}_\tria=\diag\{|T|\colon T \in \tria\}$, uniformly sparse $\bm{p}_\tria$ and $\bm{q}_\tria$ as given in Sect.~\ref{sec:theory}, and with the representation of the multi-level operator $\bm{B}_\tria^{\Ss}$ given by
\[
\bm{B}_\tria^{\Ss}=\bm{E}_\tria^\top \Big(\sum_{j=0}^L  (\bm{H}_j \bm{R}_j-\bm{P}_j \bm{H}_{j-1} \bm{R}_{j-1})^\top 2^{ -j/2} (\bm{H}_j \bm{R}_j-\bm{P}_j \bm{H}_{j-1} \bm{R}_{j-1})\Big)\bm{E}_\tria,
\]
for the representations $\bm{E}_\tria, \bm{H}_j, \bm{R}_j$ and $\bm{P}_j$ as provided in Sect.~\ref{sec:impl} \new{(the minor adaptations in the manifold case described in Sect.~\ref{sec:manifold} to the matrix representations from Sections \ref{sec:theory} and \ref{sec:impl} vanish in the current simple case).}

The BEM++ software package~\cite{249.04} is used to approximate the matrix representation of the discretized single layer operator $\bm{A}_\tria$
by hierarchical matrices based on adaptive cross approximation~\cite{127.7, 19.896}.

\new{Equipping $\V_\tria$ and $\R^{\dim \V_\tria}$ with `energy-norms' $\sqrt{(A_\tria \cdot)(\cdot)}$ or $\|\bm{A}_\tria^{\frac12}\cdot\|$, respectively, we calculated the (spectral) condition numbers
$\kappa_{\cL(\V_\tria,\V_\tria)}(G_\tria A_\tria)=$\linebreak$\kappa_{\cL(\R^{\dim \V_\tria},\R^{\dim \V_\tria})}(\bm{G}_\tria \bm{A}_\tria)=\rho(\bm{G}_\tria \bm{A}_\tria)\rho((\bm{G}_\tria \bm{A}_\tria)^{-1})$ using the Lanczos method.}

As initial partition $\tria_\bot = \tria_1$ of $\Gamma$ we take a conforming partition consisting of $2$ triangles per side, so $12$ triangles in total, \new{with an assignment of the newest vertices}
that satisfies the matching condition.
\new{We fixed $\beta = 5.3$, being the value for which, for a relative small uniform refinement $\tria$ of $\tria_\bot$, we found $\rho(\bm{D}_\tria^{-1}\bm{p}_\tria^\top \bm{B}^{\Ss} _\tria \bm{p}_\tria \bm{D}_\tria^{-1} \bm{A}_\tria) = \rho(\bm{D}_\tria^{-1} \beta \bm{q}_\tria^\top \bm{D}_\tria^{1/2} \bm{q}_\tria \bm{D}_\tria^{-1}\bm{A}_\tria)$.}

\subsection{Uniform refinements}
Here we let $\bbT$ be the sequence $\{\tria_k\}_{k \geq 1}$ of (conforming) uniform refinements, that is, $\tria_k \succ \tria_{k-1}$ is found by bisecting each triangle from $\tria_{k-1}$ into $2$ subtriangles using Newest Vertex Bisection.

Table~\ref{table:unif} shows the condition numbers of the preconditioned system in this situation. The condition numbers are relatively small, and the timing results show that the implementation of the preconditioner is indeed linear.

\begin{table}[]
\centering
\caption{Spectral condition numbers of the preconditioned single layer system discretized by piecewise constants $\Ss_\tria^{-1,0}$, using uniform refinements.
Preconditioner $\bm{G}_\tria$ is constructed using the multi-level operator with $\beta = 5.3$.
The last column indicates the number of seconds per degree of freedom per application of $\bm{G}_\tria$.}
\begin{tabular}{@{}rrccc@{}}
\toprule
dofs     &   $\kappa_S(\bm{A}_\tria)$ & $\kappa_S(\bm{G}_\tria\bm{A}_\tria)$ & sec / dof \\
\toprule
$    12$&$  14.5$& $2.6$ &   $2.6\cdot10^{-5}$\\
$    48$&$  31.0$& $2.7$ &   $1.4\cdot10^{-5}$\\
$   192$&$  59.9$& $2.8$ &   $4.9\cdot10^{-6}$\\
$   768$&$ 118.7$& $3.3$ &   $1.4\cdot10^{-6}$\\
$  3072$&$ 234.6$& $3.8$ &   $6.3\cdot10^{-7}$\\
$ 12288$&$ 450.4$& $4.1$ &   $3.3\cdot10^{-6}$\\
$ 49152$&$ 852.5$& $4.3$ &   $6.5\cdot10^{-7}$\\
$196608$&$1566.4$& $4.5$ &   $7.3\cdot10^{-7}$\\
$786432$&$2730.5$& $4.6$ &   $7.8\cdot10^{-7}$\\
\bottomrule
\end{tabular}
\label{table:unif}
\end{table}

\subsection{Local refinements}
Here we take $\bbT$ as a sequence $\{\tria_k\}_{k \geq 1}$ of (conforming) locally refined partitions,
where $\tria_k \succ \tria_{k-1}$ is constructed by applying Newest Vertex Bisection to all triangles in $\tria_{k-1}$ that touch a corner of the cube.

Table~\ref{tbl:local} contains results for the preconditioned single layer operator discretized by piecewise constants $\Ss_\tria^{-1,0}$.
The preconditioned condition numbers are nicely bounded, and the timing results confirm that our implementation of the preconditioner is of linear complexity,
also in the case of locally refined partitions.

\begin{table}[]
\centering
\caption{Spectral condition numbers of the preconditioned single layer system discretized by piecewise constants $\Ss_\tria^{-1,0}$, using local refinements at each of the eight cube corners. Operator $\bm{G}_\tria$ is applied
    using the multi-level operator with $\beta = 5.3$. The second column is defined by $h_{\tria, min} := \min_{T \in \tria} \sqrt{|T|} $.
    The last column indicates the number of seconds per degree of freedom per application of $\bm{G}_\tria$.}
\begin{tabular}{@{}rlccc@{}}
\toprule
dofs     &  $h_{\tria,min}$ &$\kappa_S(\bm{G}_\tria\bm{A}_\tria)$ & sec / dof \\
\toprule
 $   12$ &  $1.4\cdot10^{0}$ & $ 2.63 $& $  2.5 \cdot 10^{-5}$ \\
$  336$ &  $8.8\cdot10^{-2}$ & $ 2.73 $& $  2.4 \cdot 10^{-6}$ \\
$  720$ &  $5.5\cdot10^{-3}$ & $ 2.91 $& $  1.8 \cdot 10^{-6}$ \\
$ 1104$ &  $3.4\cdot10^{-4}$ & $ 2.96 $& $  1.8 \cdot 10^{-6}$ \\
$ 1488$ &  $2.1\cdot10^{-5}$ & $ 2.99 $& $  2.2 \cdot 10^{-6}$ \\
$ 1872$ &  $1.3\cdot10^{-6}$ & $ 2.98 $& $  2.0 \cdot 10^{-6}$ \\
$ 2256$ &  $8.4\cdot10^{-8}$ & $ 3.00 $& $  2.3 \cdot 10^{-6}$ \\
$ 2640$ &  $5.2\cdot10^{-9}$ & $ 3.00 $& $  2.0 \cdot 10^{-6}$ \\
$ 3024$ &  $3.2\cdot10^{-10}$ & $ 3.01$& $  2.3 \cdot 10^{-6}$ \\
$ 3408$ &  $2.0\cdot10^{-11}$ & $ 3.01$& $  2.5 \cdot 10^{-6}$ \\
$ 3696$ &  $2.5\cdot10^{-12}$ & $ 3.01$& $  2.6 \cdot 10^{-6}$ \\
\bottomrule
\end{tabular}
\label{tbl:local}
\end{table}

\newcommand{\etalchar}[1]{$^{#1}$}

\end{document}